\definecolor{webgreen}{rgb}{0,.5,0}
\definecolor{webbrown}{rgb}{.6,0,0}
\numberwithin{equation}{section}
\DeclareMathOperator{\arctanh}{arctanh}
\DeclareMathOperator{\arccot}{arccot}
\DeclareMathOperator{\arccoth}{arccoth}
\DeclareMathOperator{\sgn}{sgn}
\begin{document}

\theoremstyle{plain}
\newtheorem{theorem}{Theorem}
\newtheorem{corollary}[theorem]{Corollary}
\newtheorem{lemma}[theorem]{Lemma}
\theoremstyle{remark}
\newtheorem{remark}[theorem]{Remark}
\newtheorem{example}[theorem]{Example}

\allowdisplaybreaks[4]

\begin{center}
\vskip 1cm{\LARGE\bf On Some Series Involving the Central Binomial Coefficients\\
\vskip .11in }

\vskip 0.8cm

{\large
Kunle Adegoke \\
Department of Physics and Engineering Physics, \\ Obafemi Awolowo University, Ile-Ife\\ Nigeria \\
\href{mailto:adegoke00@gmail.com}{\tt adegoke00@gmail.com}

\vskip 0.25 in

Robert Frontczak \\ Independent Researcher, Reutlingen\\ Germany \\
\href{mailto:robert.frontczak@lbbw.de}{\tt robert.frontczak@web.de}

\vskip 0.25 in

Taras Goy \\
Faculty of Mathematics and Computer Science \\ Vasyl Stefanyk Precarpathian National University, Ivano-Frankivsk\\ Ukra\-ine \\
\href{mailto:taras.goy@pnu.edu.ua}{\tt taras.goy@pnu.edu.ua}}
\end{center}

\vskip .25in

\begin{abstract}

In this paper, we explore a variety of series involving the central binomial coefficients, highlighting their structural properties and connections to other mathematical objects. Specifically, we derive new closed-form representations and examine the convergence properties of infinite series with a repeating alternation pattern of signs involving central binomial coefficients. More concretely, we derive the series  $$\sum\limits_{n=0}^{\infty}\frac{(-1)^{\omega_n}}{2n+1}\tbinom{2n}{n}x^n,\,\,\, \sum\limits_{n=0}^{\infty}{(-1)^{\omega_n}}\tbinom{2n}{n}x^n\,\,\, \text{and} \,\,\, \sum\limits_{n=0}^{\infty}{(-1)^{\omega_n}}n\tbinom{2n}{n}x^n,$$
where $\omega_n$ represents both $\lfloor\frac{n}{2}\rfloor$ and
$\lceil\frac{n}{2}\rceil$. Also, we present novel series involving Fibonacci and Lucas numbers, deriving many interesting identities.

\medskip

Keywords: Central binomial coefficient, series, Fibonacci numbers, Lucas numbers.

\medskip

2020 Mathematics Subject Classification: 40A30, 11B37, 11B39.

\end{abstract}

\section{Introduction}

In the literature, the terms ``central binomial coefficients'' or ``middle binomial coefficients'' are typically used to refer to the binomial coefficients  $$\binom{2n}{n}=\frac{(2n)!}{(n!)^2},$$ as they form the central spine of Pascal's triangle when it is viewed as a centrally symmetric triangle. The central binomial coefficients have long been a cornerstone in combinatorics and mathematical analysis due to their rich combinatorial interpretations and connections to diverse areas of mathematics. These coefficients appear in a variety of contexts, ranging from the enumeration of lattice paths and Catalan structures to their role in number theory, special functions, and asymptotic analysis \cite{alzer,banderier,barry,ferrari,ford,kessler,tauraso}.

One particularly intriguing aspect of central binomial coefficients is their occurrence in both finite and infinite series. Such series naturally emerge in problems involving combinatorial summation, generating functions, and integral representations of special functions, often unveiling deeper connections between seemingly unrelated branches of mathematics by bridging combinatorial structures with analytical tools \cite{batir1,batir2,campbell,lehmer,li1,li2,lim,pomer,qi,sprug}.

It is worth noting that, since $$\binom{2n}{n}=(n+1)C_{n},$$ where $C_{n}$ denotes the Catalan numbers, our results can equivalently be expressed in terms of the Catalan numbers. Similar series have been recently studied by the authors \cite{AFG1,AFG2}, as well as in the works \cite{bhandari,boya,chen,chu}, among others.

For more information about central binomial coefficients and their applications  we refer to the  On-Line Encyclopedia of Integer Sequences \cite{OEIS} where central binomial coefficients are indexed under entry A000984.

Despite their ubiquity, the study of series involving central binomial coefficients is far from exhausted. Many of their structural properties and relationships remain underexplored, and new techniques continue to uncover unexpected results. In this paper, we contribute to this ongoing exploration by deriving and analyzing various series involving central binomial coefficients and floor and ceiling functions. Similar classes of series were studied recently by Fan and Chu \cite{FanChu} for the Riemann zeta and the Dirichlet beta functions.

In Section 3 we will also establish connections with the Fibonacci and Lucas numbers. As usual, the Fibonacci numbers $F_n$ and the Lucas numbers $L_n$ are defined
through the recurrence relations $F_n = F_{n-1}+F_{n-2}$, $n\ge 2$, with initial values $F_0=0$, $F_1=1$ and $L_n = L_{n-1}+L_{n-2}$, $n\ge 2$, with $L_0=2$, $L_1=1$. For negative indices, the recurrence relations are given by  $F_{-n}=(-1)^{n-1}F_n$ and $L_{-n}=(-1)^n L_n$.
For any integer $n$, they possess  Binet's formulas
\begin{equation}\label{binet}
F_n = \frac{\alpha^n - \beta^n }{\alpha - \beta },\qquad L_n = \alpha^n + \beta^n,
\end{equation}
with $\alpha=(1+\sqrt{5})/2$ being the golden ratio and $\beta=-{1}/{\alpha}$.

\section{Main results}

This section is based on the following fundamental lemma.
\begin{lemma}\label{main_lem}
	For real variable $x$, we have
	\begin{gather}\label{LemmaRe}
	{\rm Re}\big(\arcsin \big((1+i)x\big) \big) = \arctan\!\left(\frac{\sqrt2x}{\sqrt{\sqrt {1 + 4x^4 } + 1}}\right)\!,\\
	\label{LemmaIm}
	{\rm Im}\big(\arcsin \big((1+i)x\big) \big)= \arctanh\!\left(\frac{\sqrt2x}{\sqrt{\sqrt {1 + 4x^4 } + 1}}\right)\!,
	\end{gather}
	where $i=\sqrt{-1}$ is the imaginary unit.
\end{lemma}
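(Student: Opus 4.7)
The plan is to set $\arcsin((1+i)x)=u+iv$ with $u,v\in\mathbb{R}$, convert the equation $\sin(u+iv)=(1+i)x$ into a coupled real system via the addition formula, and solve explicitly for $\tan u$ and $\tanh v$. Expanding gives $\sin u\cosh v + i\cos u\sinh v = x+ix$, so
\begin{equation*}
\sin u\cosh v = x,\qquad \cos u\sinh v = x.
\end{equation*}
Dividing these (treating the nontrivial case $x\ne 0$, with the trivial case handled separately) yields $\tan u=\tanh v$. This single identification is the structural heart of the lemma, since it forces the two sides of \eqref{LemmaRe} and \eqref{LemmaIm} to share the same argument.

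Next I would introduce $t:=\tan u=\tanh v$ and substitute back. Using $\sin u=t/\sqrt{1+t^2}$, $\cosh v=1/\sqrt{1-t^2}$ in the first equation gives $t/\sqrt{1-t^4}=x$, i.e.
\begin{equation*}
x^2 t^4 + t^2 - x^2 = 0.
\end{equation*}
Treated as a quadratic in $t^2$, the positive root is $t^2=(\sqrt{1+4x^4}-1)/(2x^2)$; rationalizing the numerator against $\sqrt{1+4x^4}+1$ turns this into
\begin{equation*}
t^2 = \frac{2x^2}{\sqrt{1+4x^4}+1},
\end{equation*}
so that $t=\sqrt{2}\,x/\sqrt{\sqrt{1+4x^4}+1}$, with the sign chosen to agree with $\operatorname{sgn}(x)$. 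Because $\sqrt{1+4x^4}\ge 2x^2$, one has $t^2<1$, which legitimises the use of $\arctanh t$ and places $u$ in the principal branch of $\arctan$. Inverting $\tan u=t$ and $\tanh v=t$ then yields the two claimed formulas simultaneously.

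The main technical obstacle is not the algebra but the branch tracking: I must verify that the $u$ and $v$ produced by $\arctan$ and $\arctanh$ lie in the principal range of $\arcsin$, so that $u+iv$ equals $\arcsin((1+i)x)$ and not some other preimage. I would handle this by noting that $\arcsin$ is odd, that both sides of \eqref{LemmaRe}--\eqref{LemmaIm} vanish at $x=0$ and are analytic (or at least continuous and monotonic in $x$ on each half-line), and that $|t|<1$ keeps us in the principal domain, so continuity forces agreement globally. A brief remark confirming the $x=0$ case and the sign choice for $\sqrt{t^2}$ completes the argument.
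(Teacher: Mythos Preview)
Your argument is correct and cleanly executed. The key observation $\tan u=\tanh v$, obtained by dividing the two real equations, is exactly what makes the common argument in \eqref{LemmaRe} and \eqref{LemmaIm} emerge, and your quadratic in $t^2$ followed by rationalization recovers the stated expression. The branch discussion (principal range of $\arcsin$, $|t|<1$, continuity and oddness in $x$) is adequate.

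The paper's route is genuinely different. Rather than splitting $\sin(u+iv)$ into real and imaginary parts, it first passes through the identity $\tan(\arcsin z)=z/\sqrt{1-z^2}$ and records the intermediate decomposition
\[
\arcsin\big((1+i)x\big)=\arctan\!\Big(\tfrac{x\sqrt2}{\sqrt{\sqrt{1+4x^4}-1}}\Big)+\arctan\!\Big(\tfrac{ix\sqrt2}{\sqrt{\sqrt{1+4x^4}-1}}\Big),
\]
verified by taking tangents of both sides via the addition formula; it then invokes $\arctan x=-i\arctanh(ix)$ and further algebra to reach \eqref{LemmaRe}--\eqref{LemmaIm}. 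That approach is slick in that it packages everything into a single complex identity, but it has two costs: the intermediate arguments carry $\sqrt{\sqrt{1+4x^4}-1}$ in the denominator (so one still has to manipulate to the ``$+1$'' form, essentially the reciprocal-rationalization you perform), and the branch bookkeeping is more delicate because the intermediate arctangent arguments are unbounded near $x=0$. Your method bypasses both issues by working directly with $u$ and $v$ and never leaving the region $|t|<1$; it is the more elementary and more transparent of the two.
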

\begin{proof}
	Both formulas can be easily derived from the identity
	\begin{equation*}
	\arcsin \big((1+i)x\big) = \arctan\! \left(\!\frac{x\sqrt2}{\sqrt{\sqrt {1 + 4x^4 }- 1}}\right) +
	\arctan\!\left(\!\frac{i x\sqrt2}{\sqrt{\sqrt {1 + 4x^4 }- 1}}\right)\!,
	\end{equation*}
	which follows by taking the tangent of both sides and applying the well-known formulas
	\begin{gather*}
	\tan(\arcsin x)=\frac{x}{\sqrt{1-x^2}},\qquad \tan(x+y)=\frac{\tan x+\tan y}{1-\tan x\tan y},\\
	\arctan x= -i\arctanh(ix),
	\end{gather*}
	 and performing some algebraic manipulations.
\end{proof}
\begin{lemma} \label{diff}
	We have
	\begin{gather*}
	\frac{d}{dx}\arctan\! \left( {\frac{x}{\sqrt {1 + \sqrt {1 + x^4 } } }} \right) = \sqrt {\frac{{\sqrt {1 + x^4 } - x^2 }}{2(1 + x^4 )}} ,\\
	\frac{d}{dx}\arctanh\! \left( {\frac{x}{{\sqrt {1 + \sqrt {1 + x^4 } } }}} \right) = \sqrt {\frac{{\sqrt {1 + x^4 } + x^2 }}{2( {1 + x^4 } )}} ,\\
	\frac{d^2}{dx^2}\arctan\! \left( {\frac{x}{{\sqrt {1 + \sqrt {1 + x^4 } } }}} \right) = - x\big( \sqrt {1 + x^4 } +2x^2 \, \big)\sqrt {\frac{{\sqrt {1 + x^4 } - x^2 }}{{2(1 + x^4)^3 }}},
	\end{gather*} 
	\begin{equation*}
	\frac{d^2}{dx^2}\arctanh\! \left( {\frac{x}{{\sqrt {1 + \sqrt {1 + x^4 } } }}} \right) = x\big( \sqrt {1 + x^4}-2x^2\big)\sqrt {\frac{{\sqrt {1 + x^4 }  + x^2 }}{2( 1 + x^4)^3 }};
	\end{equation*}
	so that
	\begin{gather*}
	\frac{d}{dx}\arctan\! \left( {\frac{x}{{\sqrt {1 + \sqrt {1 + x^4 } } }}} \right)\!\frac{d}{dx}\arctanh\! \left( \frac{x}{\sqrt {1 + \sqrt {1 + x^4 }}} \right)\! = \frac{1}{2(1 + x^4)},\\
	\frac{d^2}{dx^2}\arctan\! \left( {\frac{x}{{\sqrt {1 + \sqrt {1 + x^4 } } }}} \right)\!\frac{d^2}{dx^2}\arctanh\! \left( {\frac{x}{{\sqrt {1 + \sqrt {1 + x^4 } } }}} \right)\! = \frac{{x^2 ( {3x^4 - 1})}}{{2( {1 + x^4 })^3 }}.
	\end{gather*}
\end{lemma}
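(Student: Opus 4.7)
The plan is to prove all four derivative formulas by direct application of the chain rule, relying on a small number of algebraic identities to massage the resulting expressions into the claimed radical form. Let me abbreviate $R=\sqrt{1+x^4}$, $V=1+R$, and $u=x/\sqrt{V}$, so that the two functions being differentiated are $\arctan u$ and $\arctanh u$.

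First, I would compute $u^{2}=x^{2}/V$, which gives the clean expressions
\[
1+u^{2}=\frac{V+x^{2}}{V},\qquad 1-u^{2}=\frac{V-x^{2}}{V}.
\]
Next, using $R'=2x^{3}/R$ and simplifying the numerator $V\cdot R - x^{4}=R+1$ (which equals $V$), I would show that the quotient rule collapses to
\[
u'=\frac{1}{R\sqrt{V}}.
\]
The first derivatives then follow from $(\arctan u)'=u'/(1+u^{2})$ and $(\arctanh u)'=u'/(1-u^{2})$; to convert these into the stated radicals I would invoke the key factorization
\[
(V+x^{2})(V-x^{2})=V^{2}-x^{4}=2V,
\]
which lets me rewrite $1/(V\pm x^{2})=(V\mp x^{2})/(2V)$. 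Squaring the candidate right-hand side and matching to $(u')^{2}/(1\pm u^{2})^{2}$ then reduces, via the auxiliary identity $(R\mp x^{2})V=(V\mp x^{2})^{2}/2\ldots$ more precisely $(R\mp x^{2})(1+R)=(1+x^{4}\mp x^{2})+(1\mp x^{2})R$, which is exactly the expansion of $((1\mp x^{2})+R)^{2}/2$, to a verified equality.

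For the product of the first derivatives, the simplification is immediate: multiplying the two surds gives $\sqrt{(R^{2}-x^{4})/(4(1+x^{4})^{2})}=\sqrt{1/(4(1+x^{4})^{2})}=1/(2(1+x^{4}))$, since $R^{2}-x^{4}=1$. This is the easy half of the lemma.

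The second-derivative formulas I would obtain by differentiating $\sqrt{f_{\pm}}$, where $f_{\pm}=(R\pm x^{2})/(2(1+x^{4}))$. Writing $f_{\pm}'=f_{\pm}\cdot(\log f_{\pm})'$ helps, but even the quotient rule directly gives the tidy factorization $N'D-ND'=\mp 4x(R\pm x^{2})(R\mp 2x^{2})$ after pulling $(R\pm x^{2})$ out of $N'$ via $R'=2x^{3}/R$; dividing by $2\sqrt{f_{\pm}}$ and absorbing one factor of $(R\pm x^{2})$ into the radical produces the claimed forms. Finally, the product of the two second derivatives simplifies at once using $(R-2x^{2})(R+2x^{2})=R^{2}-4x^{4}=1-3x^{4}$ together with $(R-x^{2})(R+x^{2})=1$. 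The only real obstacle throughout is bookkeeping the radicals and signs; no genuinely new idea is required beyond the two difference-of-squares identities $R^{2}-x^{4}=1$ and $V^{2}-x^{4}=2V$.
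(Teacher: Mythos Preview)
The paper states this lemma without proof; it is offered as a list of elementary differentiation facts to be used in Theorem~\ref{main_thm} and Corollary~4, so there is nothing to compare your argument against.  Your direct chain-rule computation is exactly the right (and essentially only) way to verify these identities, and the simplifications $RV-x^{4}=V$, $V^{2}-x^{4}=2V$, $R^{2}-x^{4}=1$ you isolate are precisely what makes the radicals collapse.

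One small slip in your outline: the displayed factorization $N'D-ND'=\mp\,4x(R\pm x^{2})(R\mp 2x^{2})$ has the overall sign reversed.  For $f_{+}=(R+x^{2})/(2R^{2})$ one actually gets $N'D-ND'=+4x(R+x^{2})(R-2x^{2})$, and for $f_{-}$ one gets $-4x(R-x^{2})(R+2x^{2})$.  With that correction the rest of your plan goes through verbatim, reproducing the stated second derivatives and their product $x^{2}(3x^{4}-1)/\bigl(2(1+x^{4})^{3}\bigr)$.
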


In the following theorem, by using Maclaurin expansion of $\arcsin x$, we derive some series involving central binomial coefficients in numerators.
\begin{theorem}\label{main_thm}
	For $|x|\leq 1$, the following series identities hold:
	\begin{gather}
	\sum_{n = 0}^\infty {\frac{{( - 1)^{\left\lceil \frac{n}2 \right\rceil } }\binom{{2n}}{n}}{(2n+1)2^{2n}}x^{2n + 1} }
	= \sqrt 2 \arctan\!\left( {\frac{x}{{\sqrt {1 + \sqrt {1 + x^4 } } }}} \right)\!,\label{New_1_1} \\
	\sum_{n = 0}^\infty {\frac{{( - 1)^{\left\lfloor \frac{n}2 \right\rfloor } }\binom{{2n}}{n}}{(2n+1)2^{2n}} x^{2n + 1} }
	= \sqrt 2 \arctanh\! \left( {\frac{x}{{\sqrt {1 + \sqrt {1 + x^4 } } }}} \right)\!,\label{New_1_2} \\
	\sum_{n = 0}^\infty {\frac{( - 1)^{\left\lceil \frac{n}2 \right\rceil }\binom{{2n}}{n}}{2^{2n}} x^n }
	= \sqrt {\frac{{\sqrt {1 + x^2 } - x}}{{{1 + x^2 }}}} ,\label{eq.e6dkxz3} \\
	\sum_{n = 0}^\infty {\frac{( - 1)^{\left\lfloor\frac{n}2 \right\rfloor }\binom{{2n}}{n}}{2^{2n}} x^n }
	= \sqrt {\frac{{\sqrt {1 + x^2 } + x}}{{ {1 + x^2 }}}},\label{eq.gktxw1u} \\
	\sum_{n = 1}^\infty {\frac{( - 1)^{\left\lceil\frac{n}2\right\rceil }\binom{{2n}}{n}n}{2^{2n}}  x^n }
	= - \frac x2\big( {\sqrt {1 + x^2 } + 2x } \big)\sqrt {\frac{{\sqrt {1 + x^2 } - x}}{{( {1 + x^2 })^3 }}},
	\end{gather}
	and
	\begin{equation}\label{eq.cu2e27v}
	\sum_{n = 1}^\infty {\frac{( - 1)^{\left\lfloor \frac{n}2 \right\rfloor }\binom{{2n}}{n}n}{2^{2n}} x^n }
	= \frac x2\big( { \sqrt {1 + x^2 } - 2x } \big)\sqrt {\frac{{\sqrt {1 + x^2 } + x}}{{( {1 + x^2 })^3 }}}.
	\end{equation}
\end{theorem}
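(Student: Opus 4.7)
The plan is to derive all seven identities from a single complex substitution in the Maclaurin series
\[
\arcsin z = \sum_{n=0}^\infty \frac{1}{2n+1}\binom{2n}{n}\frac{z^{2n+1}}{4^n}, \qquad |z|\le 1.
\]
Specifically, I would set $z=(1+i)x/\sqrt 2$. Since $(1+i)^2=2i$, one has $\bigl((1+i)x/\sqrt 2\bigr)^{2n+1} = (1+i)\,i^n\,x^{2n+1}/\sqrt 2$, and a direct $\bmod\,4$ reduction identifies $\operatorname{Re}((1+i)i^n)=(-1)^{\lceil n/2\rceil}$ and $\operatorname{Im}((1+i)i^n)=(-1)^{\lfloor n/2\rfloor}$. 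Multiplying through by $\sqrt 2$, separating real and imaginary parts, and invoking Lemma~\ref{main_lem} with $x$ replaced by $x/\sqrt 2$ on the right-hand side immediately yields \eqref{New_1_1} and \eqref{New_1_2}.

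For \eqref{eq.e6dkxz3} and \eqref{eq.gktxw1u} I would differentiate \eqref{New_1_1} and \eqref{New_1_2} termwise in $x$. The left sides become $\sum (-1)^{\lceil n/2\rceil}\binom{2n}{n}x^{2n}/4^n$ and the corresponding $\lfloor\cdot\rfloor$ series, while by the first-derivative formulas of Lemma~\ref{diff} the right sides reduce to $\sqrt{(\sqrt{1+x^4}\mp x^2)/(1+x^4)}$. Since both sides are even functions of $x$, the substitution $x^2\mapsto x$ produces the two claimed formulas.

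For the last two identities I would differentiate \eqref{New_1_1} and \eqref{New_1_2} a \emph{second} time and then multiply both sides by $x/2$. On the left this promotes the coefficient to $(-1)^{\lceil n/2\rceil}\binom{2n}{n}\,n\,x^{2n}/4^n$ and its $\lfloor\cdot\rfloor$ twin, with sums starting at $n=1$. On the right, the second-derivative formulas of Lemma~\ref{diff} multiplied by $\sqrt 2\cdot(x/2)$ simplify to $\mp\tfrac{x^2}{2}(\sqrt{1+x^4}\pm 2x^2)\sqrt{(\sqrt{1+x^4}\mp x^2)/(1+x^4)^3}$; a final $x^2\mapsto x$ substitution then yields \eqref{eq.cu2e27v} and its $\lceil\cdot\rceil$ companion.

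The only step genuinely requiring verification is the $\bmod\,4$ identification of the sign patterns $\operatorname{Re}((1+i)i^n)$ and $\operatorname{Im}((1+i)i^n)$ with $(-1)^{\lceil n/2\rceil}$ and $(-1)^{\lfloor n/2\rfloor}$; everything else is routine termwise differentiation of power series absolutely convergent for $|x|<1$, with the first four identities extending to $|x|=1$ by Abel's theorem together with the asymptotic $\binom{2n}{n}/4^n=O(n^{-1/2})$. The substantive analytic content, namely the real/imaginary decomposition of $\arcsin((1+i)x/\sqrt 2)$ and the explicit derivative simplifications, has already been packaged in Lemmas~\ref{main_lem} and~\ref{diff}, so the proof essentially amounts to combining these two lemmas with termwise calculus.
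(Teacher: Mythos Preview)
Your proof is correct and follows essentially the same route as the paper: substitute a complex argument $(1+i)x/\sqrt{2}$ into the Maclaurin series for $\arcsin$, separate real and imaginary parts via Lemma~\ref{main_lem} to obtain \eqref{New_1_1}--\eqref{New_1_2}, and then differentiate once and twice using Lemma~\ref{diff} to reach the remaining four identities. Your normalization of the substitution is in fact cleaner than the paper's stated ``$\sqrt{2}(1+i)x$'' (which as written would cancel the $2^{2n}$), and you are more explicit about the $x^2\mapsto x$ change of variable and the Abel-theorem endpoint justification, but the underlying argument is the same.
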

\begin{proof}
	Consider the Maclaurin expansion of $\arcsin x$:
	\begin{equation}\label{arcsin}
	\arcsin x= \sum_{n = 0}^\infty \frac{ \binom{2n}n}{2^{2n}(2n + 1)}x^{2n + 1} ,\quad |x|\leq1.
	\end{equation}
	Write $\sqrt2(1+i)x$ for $x$ in \eqref{arcsin} and take the real and imaginary parts using \eqref{LemmaRe}, \eqref{LemmaIm} and
	$${\rm Re}\big((1+i)^{2n+1}\big)=(-1)^{\lceil\frac{n}{2}\rceil}2^n, \quad {\rm Im}\big((1+i)^{2n+1}\big)=(-1)^{\lfloor\frac{n}{2}\rfloor}2^n.
	$$
	This leads to \eqref{New_1_1} and \eqref{New_1_2}. The remaining identities follow from \eqref{New_1_1} or \eqref{New_1_2} by differentiation, applying  Lemma \ref{diff}.
\end{proof}

The series \eqref{eq.e6dkxz3}--\eqref{eq.cu2e27v} allow us to give new proofs for some known combinatorial identities.
\begin{corollary}
	If $n$ is a non-negative integer, then
	\begin{gather}
	\sum_{k = 0}^n {\binom{{2k}}{k}\binom{{2( {n - k} )}}{{n - k}}}  = 2^{2n} \label{eq.u9rel73},\\
	\sum_{k = 1}^{n - 1} {\binom{{2k}}{k}\binom{{2( {n - k})}}{{n - k}}k( {n - k})} = n( {n - 1})2^{2n-3}\label{eq.dw9s201} ,
	\end{gather}
	while if $n$ is an odd non-negative integer, then
	\begin{gather}
	\sum_{k = 0}^n {( - 1)^k \binom{{2k}}{k}\binom{2( {n - k})}{n - k}} = 0 \label{eq.depf1dv},\\
	\sum_{k = 1}^{n - 1} {(-1)^k\binom{2k}{k}\binom{{2( {n - k} )}}{{n - k}}k( {n - k} )} =0 \label{eq.gtr2q3o}.
	\end{gather}
\end{corollary}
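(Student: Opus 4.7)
The plan is to realise each of the four identities as a coefficient identity in the Cauchy product of an appropriate pair chosen from \eqref{eq.e6dkxz3}--\eqref{eq.cu2e27v}. Write $c_n=\binom{2n}{n}/2^{2n}$ and denote the four series in question by $f(x)$, $g(x)$, $F(x)$, $G(x)$ in the order they appear. Elementary simplifications---based on $(\sqrt{1+x^2}-x)(\sqrt{1+x^2}+x)=1$ and $(\sqrt{1+x^2}+2x)(\sqrt{1+x^2}-2x)=1-3x^2$---collapse the right-hand sides into the four compact products
\begin{gather*}
f(x)g(x)=\frac{1}{1+x^2},\qquad g(x)^2-f(x)^2=\frac{2x}{1+x^2},\\
F(x)G(x)=-\frac{x^2(1-3x^2)}{4(1+x^2)^3},\qquad F(x)^2-G(x)^2=\frac{x^3(3-x^2)}{2(1+x^2)^3}.
\end{gather*}

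\smallskip
A short case analysis on the parities of $k$ and $n$ shows that in each of these four Cauchy products the resulting sign factor is either constant in $k$ or equals $(-1)^k$ up to a sign depending only on $n$. Explicitly, for $0\le k\le n$,
\[
(-1)^{\lceil k/2\rceil+\lfloor(n-k)/2\rfloor}=\begin{cases}(-1)^{n/2},&n\text{ even},\\ (-1)^{(n-1)/2}(-1)^k,&n\text{ odd},\end{cases}
\]
whereas $\lfloor k/2\rfloor+\lfloor(n-k)/2\rfloor$ and $\lceil k/2\rceil+\lceil(n-k)/2\rceil$ exhibit the opposite $k$-dependence: a factor of $(-1)^k$ for even $n$ and constancy for odd $n$. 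Consequently the Cauchy coefficient of $x^n$ in each product reduces to a numerical multiple of exactly one of $\sum_k c_k c_{n-k}$, $\sum_k(-1)^k c_k c_{n-k}$, $\sum_k k(n-k)\,c_k c_{n-k}$, or $\sum_k(-1)^k k(n-k)\,c_k c_{n-k}$.

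\smallskip
With these reductions in hand, every identity falls out of a single coefficient comparison, using the power-series expansions $1/(1+x^2)=\sum_m(-1)^m x^{2m}$ and $1/(1+x^2)^3=\sum_m(-1)^m\binom{m+2}{2}x^{2m}$. Matching coefficients in $fg$ yields \eqref{eq.u9rel73} for even $n$ and \eqref{eq.depf1dv} for odd $n$; matching in $g^2-f^2$ supplies the remaining case of \eqref{eq.u9rel73} (odd $n$). In the same spirit, $FG$ delivers \eqref{eq.dw9s201} for even $n$ together with \eqref{eq.gtr2q3o}, while $F^2-G^2$ closes out \eqref{eq.dw9s201} for odd $n$, after multiplying through by $2^{2n}$. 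The only real obstacle is the bookkeeping that distinguishes the four sign patterns in the second paragraph; once that table is established, the remaining work is routine coefficient extraction from $(1+x^2)^{-s}$ for $s=1,3$.
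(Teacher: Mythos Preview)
Your argument follows the paper's strategy exactly: multiply two of the generating functions \eqref{eq.e6dkxz3}--\eqref{eq.cu2e27v}, simplify the right-hand side, and compare Cauchy coefficients. The paper uses only the cross-products $fg$ and $FG$ (in your notation), which---once the sign factor $(-1)^{\lceil k/2\rceil+\lfloor(n-k)/2\rfloor}$ is resolved---yield \eqref{eq.u9rel73} and \eqref{eq.dw9s201} for \emph{even} $n$ together with \eqref{eq.depf1dv} and \eqref{eq.gtr2q3o} for odd $n$. You go one step further and invoke the auxiliary products $g^2-f^2=\dfrac{2x}{1+x^2}$ and $F^2-G^2=\dfrac{x^3(3-x^2)}{2(1+x^2)^3}$, which handle the odd-$n$ instances of \eqref{eq.u9rel73} and \eqref{eq.dw9s201}; these cases are asserted but not actually covered by the paper's argument as written. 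So your route is the same idea, slightly more complete in its bookkeeping. The sign table you record is correct, and the coefficient extraction from $(1+x^2)^{-3}$ indeed gives $\sum_k k(n-k)c_kc_{n-k}=n(n-1)/8$ for odd $n$ as well.
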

\begin{proof}
	Identity \eqref{eq.e6dkxz3} multiplied by identity \eqref{eq.gktxw1u} gives
	\begin{equation*}
	\sum_{n = 0}^\infty {\frac{( - 1)^{\left\lceil \frac{n}2 \right\rceil }}{2^{2n}} \binom{{2n}}{n} x^n } \sum_{n = 0}^\infty  {\frac{( - 1)^{\left\lfloor \frac{n}2 \right\rfloor }}{2^{2n}} \binom{{2n}}{n} x^n } = \frac{1}{{1 + x^2 }} = \sum_{n = 0}^\infty {( - 1)^n x^{2n} },
	\end{equation*}
	so that an application of the Cauchy product rule gives
	\begin{equation*}
	\sum_{n = 0}^\infty  { \frac{x^n}{2^{2n}} \sum_{k = 0}^n {( - 1)^{\left\lceil\frac {k}2 \right\rceil  + \left\lfloor {\frac{n - k}2} \right\rfloor } \binom{{2k}}{k}\binom{{2( {n - k})}}{{n - k}}} }  = \sum_{n = 0}^\infty  {( - 1)^n x^{2n}};
	\end{equation*}
	and hence
	\begin{align*}
	&\sum_{n = 0}^\infty { \frac{x^{2n}}{2^{4n}} \sum_{k = 0}^{2n} {( - 1)^{\left\lceil \frac{k}2 \right\rceil + \left\lfloor {\frac{2n - k}2}\right\rfloor } \binom{{2k}}{k}\binom{{2( {2n - k} )}}{{2n - k}}} }   \\
	&\qquad\qquad+\sum_{n = 0}^\infty  { \frac{x^{2n - 1}}{2^{ 2(2n - 1)}} \sum_{k = 0}^{2n - 1} {( - 1)^{\left\lceil \frac{k}2 \right\rceil  + \left\lfloor {\frac{2n - 1 - k}2} \right\rfloor } \binom{{2k}}{k}\binom{{2( {2n - 1 - k} )}}{{2n - 1 - k}}} } = \sum\limits_{n = 0}^\infty  {( - 1)^n x^{2n} }.
	\end{align*}
	Comparing the coefficients of $x^n$ on the left-hand side and the right-hand side gives \eqref{eq.u9rel73} and \eqref{eq.depf1dv}.
	The proof of \eqref{eq.dw9s201} and \eqref{eq.gtr2q3o} is similar and follows from
	\begin{align*}
	\sum_{n = 1}^\infty  & \frac{( - 1)^{\left\lceil \frac{n}2 \right\rceil }\binom{{2n}}{n}n}{2^{2n}}  x^n \sum_{n = 1}^\infty  \frac{( - 1)^{\left\lfloor \frac{n}2 \right\rfloor }\binom{{2n}}{n}n}{2^{2n}} x^n  =\frac{{x^2( 3x^2  - 1 )}}{4( 1 + x^2 )^3 } \\
	&\qquad\quad= \frac{3}{{4( 1 + x^2 )}} - \frac{7}{4( {1 + x^2 })^2} + \frac{1}{( {1 + x^2 })^3} = \frac14\sum\limits_{n = 1}^\infty  {( - 1)^n {n( {2n - 1} )}}x^{2n}.
	\end{align*}
	Note that we used
	$\lceil \frac{k}2\rceil  + \lfloor - \frac{k}2\rfloor  = 0$ and $\lceil \frac{k}2 \rceil  - \left\lceil \frac{k + 1}2 \right\rceil  = ( - 1)^k$.
\end{proof}
\begin{remark}
	The formula in \eqref{eq.u9rel73} is well known; see, for example, \cite[p.~32]{gould}, \cite[p.~187]{graham} or the article by Miki\'{c} \cite{Mikic}. Formula \eqref{eq.depf1dv} can be found in \cite[p.~33]{gould} regardless of the parity of $n$ as follows:
		\begin{equation*}
	\sum_{k = 0}^n (- 1)^k \binom{{2k}}{k} \binom{2(n - k)}{n - k} = \frac{1+(-1)^n}{2} \binom{n}{n/2} 2^n
		\end{equation*}
	or in an equivalent form in \cite[Eq.~(2)]{Mikic}. 
	
Formula \eqref{eq.dw9s201} can be derived from the relations
	\begin{gather*}
	\sum_{k = 0}^{n} k \binom{{2k}}{k} \binom{2(n - k)}{n - k} = n 2^{2n-1},\\
	\sum_{k = 0}^{n-1} k^2 \binom{{2k}}{k} \binom{2(n-k)}{n - k} = n (3n+1) 2^{2n-3},
	\end{gather*}
	both being deductions from
	\begin{equation*}
	\sum_{k=0}^n 2^{2(n-k)} \binom{n}{k} \binom{2k}{k} t^k = \sum_{k=0}^n \binom{2k}{k} \binom{2(n-k)}{n-k} (1+t)^k,
	\end{equation*}
	which can also be found in Gould's book \cite{gould}.
\end{remark}

We continue with some examples that are consequences of the main results stated in Theorem \ref{main_thm}.
\begin{example}
If $x=1/2$, $x={\sqrt2}/4$, and $x=1/4$ then from Theorem \ref{main_thm} we have
	\begin{align*}
	&\sum_{n = 0}^\infty \frac{(- 1)^{\left\lceil{\frac{n}{2}}\right\rceil}\binom{2n}{n}}{2^{2n}(2n+1)} =\sqrt2\arccot(\sqrt{\delta}),\quad \sum_{n = 0}^\infty \frac{(- 1)^{\left\lfloor{\frac{n}2}\right\rfloor}\binom{2n}{n}}{2^{2n}(2n+1)} =\sqrt2\arccoth(\sqrt{\delta}),\\
	&\sum_{n = 0}^\infty \frac{(- 1)^{\left\lceil{\frac{n}{2}}\right\rceil}\binom{2n}{n}}{2^{3n}(2n+1)} =2\arccot(\sqrt{\alpha^{3}}),\quad\, \sum_{n = 0}^\infty \frac{(- 1)^{\left\lfloor{\frac{n}2}\right\rfloor}\binom{2n}{n}}{2^{3n}(2n+1)} =2\arccoth(\sqrt{\alpha^{3}}),\\
	&\hspace{3.1cm}\sum_{n = 0}^\infty \frac{(- 1)^{\left\lceil\frac{n}{2}\right\rceil}\binom{2n}{n}}{2^{4n}(2n+1)} =2\sqrt2\arctan{\sqrt{\sqrt{17}-4}},\\
	&\hspace{3.1cm}\sum_{n = 0}^\infty \frac{(- 1)^{\left\lfloor{\frac{n}{2}}\right\rfloor}\binom{2n}{n}}{2^{4n}(2n+1)} =2\sqrt2\arctanh{\sqrt{\sqrt{17}-4}},
	\end{align*}
	\vspace{-0.3 cm}
	\begin{align*}
	&\sum_{n = 0}^\infty \frac{(- 1)^{\left\lceil{\frac{n}{2}}\right\rceil}\binom{2n}{n}}{2^{2n}} =\frac1{\sqrt{2\delta}},\hspace{2.675cm} \sum_{n = 0}^\infty \frac{(- 1)^{\left\lfloor{\frac{n}{2}}\right\rfloor}\binom{2n}{n}}{2^{2n}} =\frac{\sqrt{2\delta}}{2},\\
	&\sum_{n = 0}^\infty \frac{(- 1)^{\left\lceil{\frac{n}{2}}\right\rceil}\binom{2n}{n}}{2^{3n}} =\frac{2}{\sqrt{5\alpha}},\hspace{2.65cm} \sum_{n = 0}^\infty \frac{(- 1)^{\left\lfloor{\frac{n}{2} }\right\rfloor}\binom{2n}{n}}{2^{3n}}=\frac{2\sqrt{5\alpha}}{5},	\\
	&\sum_{n = 0}^\infty \frac{(- 1)^{\left\lceil{\frac{n}{2}}\right\rceil}\binom{2n}{n} }{2^{4n}} =\frac{2\sqrt{\sqrt{17}-1}}{\sqrt{17}},\hspace{1.45cm}\sum_{n = 0}^\infty \frac{(- 1)^{\left\lfloor{\frac{n}{2}}\right\rfloor}\binom{2n}{n} }{2^{4n}} =\frac{2\sqrt{\sqrt{17}+1}}{\sqrt{17}},
	\end{align*}
	and
	\begin{align*}
	&\sum_{n = 1}^\infty \frac{(- 1)^{\left\lceil{\frac{n}{2}}\right\rceil}n\binom{2n}{n}}{2^{2n}} =-\frac{\sqrt{\delta}}{4},\hspace{1.6cm} \sum_{n = 1}^\infty \frac{(- 1)^{\left\lfloor{\frac{n}{2}}\right\rfloor}n\binom{2n}{n}}{2^{2n}} =-\frac{1}{4\sqrt{\delta}},\\
	&\sum_{n = 1}^\infty \frac{(- 1)^{\left\lceil{\frac{n}{2}}\right\rceil}n\binom{2n}{n}}{2^{3n}} =-\frac{\sqrt{5\alpha^5}}{25},\hspace{1.2cm} \sum_{n = 1}^\infty \frac{(- 1)^{\left\lfloor{\frac{n}{2}}\right\rfloor}n\binom{2n}{n}}{2^{3n}} =\frac{1}{5\sqrt{5\alpha^5}},\\
	&\hspace{2cm}\sum_{n = 1}^\infty \frac{(- 1)^{\left\lceil{\frac{n}{2}}\right\rceil}n\binom{2n}{n}}{2^{4n}} =-\frac{\sqrt{17}}{289}\sqrt{17\sqrt{17}+47},\\
	&\hspace{2cm}\sum_{n = 1}^\infty \frac{(- 1)^{\left\lfloor{\frac{n}{2}}\right\rfloor}n\binom{2n}{n}}{2^{4n}} =\frac{\sqrt{17}}{289}\sqrt{17\sqrt{17}- 47},
	\end{align*}
	where  $\delta=1+\sqrt2$ denotes the silver ratio.
\end{example}

By setting $x=\frac12 \sqrt{\tan\varphi}$ and performing some algebraic manipulations, we derive  the trigonometric versions of Theorem \ref{main_thm}.
\begin{theorem}\label{trigo}
	For $|\varphi|\leq\frac{\pi}{4}$, the following series identity hold true:
	\begin{gather*}
	\sum_{n = 0}^\infty \frac{(- 1)^{\left\lceil{\frac{n}{2}}\right\rceil}\binom{2n}{n}}{(2n+1)2^{2n}} \tan^n\varphi= \sgn\varphi \sqrt{2\cot\varphi}\arctan\sqrt{\tan\frac{\varphi}{2}},\\
	\sum_{n = 0}^\infty \frac{(- 1)^{\left\lfloor{\frac{n}{2}}\right\rfloor}\binom{2n}{n}}{(2n+1)2^{2n}}\tan^n\varphi= \sgn\varphi \sqrt{2\cot\varphi}\arctanh\sqrt{\tan\frac{\varphi}{2}}	,\\
	\sum_{n = 0}^\infty \frac{(- 1)^{\left\lceil{\frac{n}{2}}\right\rceil}\binom{2n}{n}}{2^{2n}}\tan^n\varphi= \sqrt{2\cos\varphi}\cos\!\left(\frac{\varphi}{2}+\frac{\pi}{4}\right)\!,\\
	\sum_{n = 0}^\infty \frac{(- 1)^{\left\lfloor{\frac{n}{2}}\right\rfloor}\binom{2n}{n}}{2^{2n}}\tan^n\varphi= \sqrt{2\cos\varphi}\cos\!\left(\frac{\varphi}{2}-\frac{\pi}{4}\right)\!,
	\end{gather*}
	and
	\begin{gather*}
	\sum_{n = 1}^\infty\frac{ (- 1)^{\left\lceil{\frac{n}{2}}\right\rceil}n\binom{2n}{n} }{2^{2n}}\tan^{n}\varphi = \frac{1+2\sin\varphi}{\sqrt{2\cos\varphi}}\sin2\varphi\sin\!\Big(\frac{\varphi}{2}-\frac{\pi}{4}\Big),\\
	\sum_{n = 1}^\infty \frac{(- 1)^{\left\lfloor{\frac{n}{2}}\right\rfloor}n\binom{2n}{n}}{2^{2n}}\tan^{n}\varphi  = \frac{\cos\frac\varphi 2\sin^2\!\big(\frac{\varphi}{2}+\frac{\pi}{4}\big)}{\sqrt{\cos\varphi}}
	\big(\cos 2\varphi -\sin 2\varphi + \cos\varphi +3 \sin\varphi-2\big).
	\end{gather*}
\end{theorem}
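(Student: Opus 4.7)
The plan is to specialize Theorem \ref{main_thm} through two trigonometric substitutions. For the arcsine-type identities \eqref{New_1_1} and \eqref{New_1_2} (whose left-hand sides contain $x^{2n+1}$), I substitute $x = \sqrt{\tan\varphi}$, then divide both sides by $\sqrt{\tan\varphi}$ to recover a series in $\tan^n\varphi$. For the remaining identities \eqref{eq.e6dkxz3}--\eqref{eq.cu2e27v} (whose left-hand sides contain $x^n$), I substitute $x = \tan\varphi$ directly. The bound $|\varphi|\le\pi/4$ matches $|\tan\varphi|\le 1$, preserving convergence. The essential simplification throughout is $\sqrt{1+\tan^2\varphi} = \sec\varphi$ (valid for $|\varphi|<\pi/2$), which collapses the nested radicals on the right-hand sides of Theorem \ref{main_thm} into rational combinations of $\sin\varphi$ and $\cos\varphi$.

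For the first two identities, the inner quantity $x/\sqrt{1+\sqrt{1+x^4}}$ becomes $\sqrt{\tan\varphi}/\sqrt{1+\sec\varphi}$; the standard half-angle identity
\[
\frac{\tan\varphi}{1+\sec\varphi} \;=\; \frac{\sin\varphi}{1+\cos\varphi} \;=\; \tan(\varphi/2)
\]
reduces this to $\sqrt{\tan(\varphi/2)}$. The prefactor $\sqrt{2}$ from the right-hand side of \eqref{New_1_1}/\eqref{New_1_2} combines with the $1/\sqrt{\tan\varphi}$ introduced by the division to give $\sqrt{2\cot\varphi}$, and the $\sgn\varphi$ accommodates $\varphi<0$, where $\sqrt{\tan\varphi}$ is imaginary but the identity extends via $\arctan(iy) = i\arctanh y$. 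For the third and fourth identities, substituting $x = \tan\varphi$ reduces the right-hand side to $\sqrt{\cos\varphi(1\mp\sin\varphi)}$; applying $1\mp\sin\varphi = (\cos(\varphi/2)\mp\sin(\varphi/2))^2$ together with $\cos(\varphi/2)\mp\sin(\varphi/2) = \sqrt{2}\cos(\varphi/2\pm\pi/4)$ yields the stated form $\sqrt{2\cos\varphi}\cos(\varphi/2\pm\pi/4)$.

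The $n$-weighted identities are the most involved and constitute the main obstacle. Substituting $x = \tan\varphi$ in the fifth and sixth identities of Theorem \ref{main_thm} and using $\tan\varphi\cos^3\varphi = \sin\varphi\cos^2\varphi$, $\sec\varphi\pm 2\tan\varphi = (1\pm 2\sin\varphi)/\cos\varphi$, and $\sqrt{\sec\varphi\mp\tan\varphi} = \sqrt{(1\mp\sin\varphi)/\cos\varphi}$ compresses the right-hand side to
\[
\mp\tfrac{1}{2}\sin\varphi\,(1\pm 2\sin\varphi)\sqrt{\cos\varphi\,(1\mp\sin\varphi)}.
\]
Rewriting this in the theorem's stated form requires $\sqrt{2}\sin(\varphi/2+\pi/4) = \cos(\varphi/2)+\sin(\varphi/2)$ and $\sqrt{2}\cos(\varphi/2+\pi/4) = \cos(\varphi/2)-\sin(\varphi/2)$, together with $\sin 2\varphi = 2\sin\varphi\cos\varphi$. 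For the final identity, the principal technical task is the algebraic reduction
\[
\cos 2\varphi - \sin 2\varphi + \cos\varphi + 3\sin\varphi - 2 \;=\; 2\sin(\varphi/2)\,\big(\cos(\varphi/2)-\sin(\varphi/2)\big)\,(1 - 2\sin\varphi),
\]
which I would verify by writing every term in terms of $\sin(\varphi/2)$ and $\cos(\varphi/2)$ via $\cos\varphi = 1-2\sin^2(\varphi/2)$, $\sin\varphi = 2\sin(\varphi/2)\cos(\varphi/2)$, and $\cos 2\varphi = 1-8\sin^2(\varphi/2)\cos^2(\varphi/2)$, then expanding and collecting. Combined with $\sin^2(\varphi/2+\pi/4) = (1+\sin\varphi)/2$, this factorization converts the compact product form above into the asymmetric expression appearing in the theorem.
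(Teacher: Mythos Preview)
Your proposal is correct and follows essentially the same approach as the paper: the paper's one-line proof simply says to set $x=\tfrac12\sqrt{\tan\varphi}$ in Theorem~\ref{main_thm} and perform algebraic manipulations, while you spell out the substitutions (correctly $x=\sqrt{\tan\varphi}$ for \eqref{New_1_1}--\eqref{New_1_2} and $x=\tan\varphi$ for \eqref{eq.e6dkxz3}--\eqref{eq.cu2e27v}) and the half-angle identities that drive the simplification. Your version is considerably more detailed than the paper's, and your explicit substitutions are the ones that actually work; the factor $\tfrac12$ in the paper's stated substitution appears to be a slip.
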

\begin{example}
	Choosing $\varphi={\pi}/{6}$ and $\varphi={\pi}/{8}$ in the formulas of Theorem \ref{trigo} with $(-1)^{\lceil{{n}/{2}}\rceil}$
	in conjunction with the trigonometric evaluations
	\begin{gather*}
	\sin\frac{\pi}{8} = \sqrt{\frac{2-\sqrt2}{2}}, \qquad \cos\frac{\pi}{8} = \sqrt{\frac{2+\sqrt2}{2}},	\qquad
	\sin\frac{\pi}{16} = \frac{\sqrt{2-\sqrt{2+\sqrt2}}}{2},\\
	\cos\frac{\pi}{16} = \frac{\sqrt{2+\sqrt{2+\sqrt2}}}{2}, \qquad
	\sin\frac{\pi}{12} = \frac{\sqrt{2-\sqrt3}}{2}, \qquad \cos\frac{\pi}{12} = \frac{\sqrt{2+\sqrt3}}{2},
	\end{gather*}
	yields the following results:
	\begin{gather*}
	\sum_{n = 0}^\infty \frac{(- 1)^{\left\lceil{\frac{n}{2}}\right\rceil}\binom{2n}{n}}{(2n+1)(4\sqrt3)^n}={\sqrt{2\sqrt3}}\arctan{\sqrt{2-\sqrt3}},\\
	\sum_{n = 0}^\infty \frac{(- 1)^{\left\lceil{\frac{n}{2}}\right\rceil}\binom{2n}{n}}{(4\sqrt3)^n}=\frac{\sqrt{\sqrt3}}{2},\hspace{1.3cm}
	\sum_{n = 0}^\infty \frac{(- 1)^{\left\lceil{\frac{n}{2}}\right\rceil}n\binom{2n}{n}}{(4\sqrt3)^n}=-\frac{\sqrt{\sqrt3}}{4};\\
	\sum_{n = 0}^\infty \frac{(- 1)^{\left\lceil{\frac{n}{2}}\right\rceil}\binom{2n}{n}}{(2n+1)(4\delta)^n}=\sqrt{2\delta}\arctan\sqrt{\sqrt{\sqrt{8}\delta}-\delta},\\
	\sum_{n = 0}^\infty \frac{(- 1)^{\left\lceil{\frac{n}{2}}\right\rceil}\binom{2n}{n}}{(4\delta)^n}=\frac{\sqrt{\delta\sqrt{\sqrt8\delta}}}{\sqrt2\,\Big(\sqrt{2+\sqrt{\sqrt{2}\delta}}+\sqrt{2-\sqrt{\sqrt{2}\delta}}\Big)},\\
	\sum_{n = 0}^\infty \frac{(- 1)^{\left\lceil{\frac{n}{2}}\right\rceil}n\binom{2n}{n}}{(4\delta)^n} = -\frac{\sqrt{2\delta}+\sqrt{\sqrt8}}{4\sqrt{\sqrt{\sqrt8}\delta}\,\Big(\sqrt{2+\sqrt{\sqrt{2}\delta}}+\sqrt{2-\sqrt{\sqrt{2}\delta}}\Big)}.
	\end{gather*}
\end{example}

By adding and subtracting \eqref{New_1_1} and \eqref{New_1_2}, one can derive series involving the binomial coefficients $\binom{4n}{2n}$ and $\binom{4n+2}{2n+1}$. Similar series can also be obtained using other pairs of formulas from Theorem \ref{main_thm}. Alternating series with binomial coefficients $\binom{4n}{2n}$ and $\binom{4n+2}{2n+1}$ in the denominator were recently studied by the authors in \cite{NNTDM-24}.
\begin{corollary} For $|x|\leq\frac12$, the following series identity hold:
	\begin{align*}
	\sum_{n = 0}^\infty&   \frac{(-1)^n\binom{4n}{2n}}{4n+1}x^{4n+1}= \frac{\sqrt2}{4}\left(\arctanh\!\left(\frac{2x}{\sqrt{1+\sqrt{1+16x^4}}}\right)+\arctan\!\left(\frac{2	x}{\sqrt{1+\sqrt{1+16x^4}}}\right)\!\right)\!,\\
	\sum_{n = 0}^\infty&   \frac{(-1)^n\binom{4n+2}{2n+1}}{4n+3}x^{4n+3}= \frac{\sqrt2}{4}\left(\arctanh\!\left(\frac{2x}{\sqrt{1+\sqrt{1+16x^4}}}\right)-\arctan\!\left(\frac{2	x}{\sqrt{1+\sqrt{1+16x^4}}}\right)\!\right)\!.
	\end{align*}
\end{corollary}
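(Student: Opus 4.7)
The plan is to exploit the observation already highlighted in the text preceding the corollary: identities \eqref{New_1_1} and \eqref{New_1_2} differ only in the sign $(-1)^{\lceil n/2\rceil}$ versus $(-1)^{\lfloor n/2\rfloor}$, and these two quantities coincide when $n$ is even but are opposite when $n$ is odd. So adding \eqref{New_1_1} to \eqref{New_1_2} kills all odd-$n$ terms and doubles the even-$n$ ones, while subtracting kills all even-$n$ terms and doubles the odd-$n$ ones. Concretely, for even $n=2m$ the common sign is $(-1)^m$ and the central binomial coefficient becomes $\binom{4m}{2m}$; for odd $n=2m+1$ the difference $(-1)^{\lfloor n/2\rfloor}-(-1)^{\lceil n/2\rceil}=2(-1)^m$ and the coefficient becomes $\binom{4m+2}{2m+1}$.

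First I would form \eqref{New_1_1}+\eqref{New_1_2}. The left-hand side becomes
\[
2\sum_{m=0}^\infty\frac{(-1)^m\binom{4m}{2m}}{(4m+1)2^{4m}}\,x^{4m+1},
\]
and the right-hand side is $\sqrt 2\bigl(\arctan+\arctanh\bigr)$ applied to the common argument $x/\sqrt{1+\sqrt{1+x^4}}$. Then I substitute $x\mapsto 2x$: the factor $2^{4m+1}$ coming from $(2x)^{4m+1}$ cancels the $2^{4m}$ in the denominator and produces an overall factor of $4$ on the left, giving exactly the first identity of the corollary after dividing through by $4$. The new constraint $|2x|\le 1$ produces the stated range $|x|\le 1/2$.

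Next I would form \eqref{New_1_2}$-$\eqref{New_1_1}. Only odd indices $n=2m+1$ survive, and the calculation of the sign difference yields
\[
2\sum_{m=0}^\infty\frac{(-1)^m\binom{4m+2}{2m+1}}{(4m+3)2^{4m+2}}\,x^{4m+3}
=\sqrt 2\!\left(\arctanh-\arctan\right)\!\!\left(\frac{x}{\sqrt{1+\sqrt{1+x^4}}}\right)\!.
\]
The same rescaling $x\mapsto 2x$ turns the left side into $4\sum_{m\ge0}(-1)^m\binom{4m+2}{2m+1}(4m+3)^{-1}x^{4m+3}$, whence division by $4$ produces the second claimed identity.

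The only real issue is bookkeeping: one has to verify the two parity statements $(-1)^{\lceil n/2\rceil}+(-1)^{\lfloor n/2\rfloor}=2(-1)^{n/2}$ for even $n$ and $(-1)^{\lfloor n/2\rfloor}-(-1)^{\lceil n/2\rceil}=2(-1)^{(n-1)/2}$ for odd $n$ (both immediate from considering $n\bmod 4$), and to keep track of the powers of $2$ after the substitution $x\mapsto 2x$. Convergence of both series on $|x|\le 1/2$ follows automatically from the convergence of \eqref{New_1_1}--\eqref{New_1_2} on $|x|\le 1$ together with this substitution. No further analytic input is needed.
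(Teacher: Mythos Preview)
Your proposal is correct and follows exactly the approach the paper indicates: add and subtract \eqref{New_1_1} and \eqref{New_1_2}, use the parity of $\lceil n/2\rceil$ versus $\lfloor n/2\rfloor$ to isolate even and odd $n$, and then rescale. The paper gives only the one-line hint ``by adding and subtracting \eqref{New_1_1} and \eqref{New_1_2}''; your write-up supplies precisely the bookkeeping (the parity identities, the substitution $x\mapsto 2x$, and the resulting range $|x|\le 1/2$) that the paper leaves implicit.
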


\section{Series involving Fibonacci and Lucas numbers}

In this section, we present a family of Fibonacci and  Lucas series identities involving central binomial coefficient and  additional integer parameters $m$ and $s$.
\begin{theorem} \label{mn+s} 
	For any integers $m,s$ and real $p\geq4\alpha^m$,
	\begin{align}
	\sum_{n = 0}^\infty & 
	\frac{(-1)^{\left\lceil{\frac{n}{2}}\right\rceil}\binom{2n}{n}}{p^n(2n+1)} F_{mn+s} = \sqrt{\frac{p}{10}}
	\left(\alpha^{s-\frac{m}2}\arctan h(\alpha)  -\beta^{s-\frac{m}2}\arctan h(\beta)\right)\!,\label{FL_11}\\
\sum_{n = 0}^\infty&\frac{(- 1)^{\left\lceil{\frac{n}{2}}\right\rceil}\binom{2n}{n}}{p^n(2n+1)} L_{mn+s} = \sqrt{\frac{p}{2}}
	\left(\alpha^{s-\frac{m}2}\arctan h(\alpha)  +\beta^{s-\frac{m}2}\arctan h(\beta)\right)\!,\label{FL_21}\\
	\sum_{n = 0}^\infty& \frac{(- 1)^{\left\lfloor{\frac{n}{2}}\right\rfloor}\binom{2n}{n}}{p^n(2n+1)} F_{mn+s}=\sqrt{\frac{p}{10}}
	\left(\alpha^{s-\frac{m}2}\arctanh h(\alpha)  -\beta^{s-\frac{m}2}\arctanh h(\beta)\right)\!,\\
	\sum_{n = 0}^\infty& \frac{(- 1)^{\left\lfloor{\frac{n}{2}}\right\rfloor}\binom{2n}{n}}{p^n(2n+1)}  L_{mn+s} = \sqrt{\frac{p}{2}}
	\left(\alpha^{s-\frac{m}2}\arctanh h(\alpha)  +\beta^{s-\frac{m}2}\arctanh h(\beta)\right)\!,\label{18}
	\end{align}
where 
	$$ h(z)=\frac{2\sqrt{z^m}}{\sqrt{\sqrt{p^2+16z^{2m}}+p}};$$
	\begin{gather}
	\label{19}
	\sum_{n = 0}^\infty \frac{(- 1)^{\left\lceil\frac{n}{2}\right\rceil}\binom{2n}{n}}{p^n}  F_{mn+s}
	= \sqrt\frac{p}{5}\,
	\big(\alpha^s r^{-}(\alpha)-\beta^s r^{-}(\beta)\big),\\
	\label{20}
	\sum_{n = 0}^\infty \frac{(- 1)^{\left\lceil\frac{n}{2}\right\rceil}\binom{2n}{n}}{p^n} L_{mn+s}
	= \sqrt{p}\,\big(\alpha^s r^{-}(\alpha)+\beta^s r^{-}(\beta)\big),\\
	\label{21}
	\sum_{n = 0}^\infty \frac{(- 1)^{\left\lfloor\frac{n}{2}\right\rfloor}\binom{2n}{n}}{p^n}  F_{mn+s}
	= \sqrt{\frac{p}{5}}\,
	\big(\alpha^sr^{+}(\alpha)-\beta^s r^{+}(\beta)\big),\\
	\label{22}
	\sum_{n = 0}^\infty \frac{(- 1)^{\left\lfloor\frac{n}{2}\right\rfloor}\binom{2n}{n}}{p^n}  L_{mn+s}
	= \sqrt{p}\,	\big(\alpha^sr^{+}(\alpha)+\beta^s r^{+}(\beta)\big),
	\end{gather}
	where $$ r^{\pm}(z)=
	\sqrt{\frac{{\sqrt{p^2+16z^{2m}}\pm 4z^m}}{p^2+16z^{2m}}};$$
	and
	\begin{gather*} 
	\sum_{n = 1}^\infty \frac{(- 1)^{\left\lfloor{\frac{n}{2}}\right\rfloor}n\binom{2n}{n}}{p^n} F_{mn+s}=-\frac{2\sqrt5}{5}\sqrt{p}\left(\alpha^{m+s}t^{-}(\alpha)-\beta^{m+s}t^{-}(\beta)\right)\!,\\
	\sum_{n = 1}^\infty \frac{(- 1)^{\left\lfloor{\frac{n}{2}}\right\rfloor}n\binom{2n}{n}}{p^{n}} L_{mn+s}= 
	-2\sqrt{p}\left(\alpha^{m+s}t^{-}(\alpha)+\beta^{m+s}t^{-}(\beta)\right)\!,\\
	\sum_{n = 1}^\infty \frac{(- 1)^{\left\lceil\frac{n}2\right\rceil}n\binom{2n}{n}}{p^{n}} F_{mn+s}= - \frac{2\sqrt5}{5}\sqrt p \left(\alpha^{m+s}t^{+}(\alpha)-\beta^{m+s}t^{+}(\beta)\right)\!,\\
	\sum_{n = 1}^\infty \frac{(- 1)^{\left\lceil{\frac{n}{2}}\right\rceil}n\binom{2n}{n}} {p^{n}} L_{mn+s}=-2\sqrt{p}\left(\alpha^{m+s}t^{+}(\alpha)+\beta^{m+s}t^{+}(\beta)\right)\!,
	\end{gather*}
	where
	$$	t^{\pm}(z) = \big(8z^m \mp \sqrt{p^2+16z^{2m}}\big)\sqrt{\frac{\sqrt{p^2+16z^{2m}}\pm4z^m}{(p^2+16z^{2m})^3}}.$$
\end{theorem}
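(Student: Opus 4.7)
The plan is to derive each identity by invoking Binet's formulas \eqref{binet} and applying the series in Theorem \ref{main_thm} separately to the $\alpha^{mn+s}$ and $\beta^{mn+s}$ parts. Since the Maclaurin expansion \eqref{arcsin} is analytic in $x$, the closed forms of Theorem \ref{main_thm} remain valid under complex substitutions, so only the eventual Fibonacci/Lucas combination need be real.

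First, I would recast each identity in Theorem \ref{main_thm} as a power series in a single variable $z$. Dividing \eqref{New_1_1} by $x$ and substituting $x=2\sqrt z$ yields
$$\sum_{n=0}^\infty \frac{(-1)^{\lceil n/2\rceil}\binom{2n}{n}}{2n+1}z^n = \frac{1}{\sqrt{2z}}\arctan\!\left(\frac{2\sqrt z}{\sqrt{1+\sqrt{1+16z^2}}}\right)\!,$$
and analogously for \eqref{New_1_2} with $\arctan$ replaced by $\arctanh$. Substituting $x=4z$ in \eqref{eq.e6dkxz3}, \eqref{eq.gktxw1u} and the two $n$-weighted identities produces closed forms in $z$ of the same shape.

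Next, I write $F_{mn+s} = (\alpha^{mn+s}-\beta^{mn+s})/\sqrt 5$ and $L_{mn+s} = \alpha^{mn+s}+\beta^{mn+s}$ and split each target sum as
$$\sum_{n\geq 0} c_n F_{mn+s} = \frac{\alpha^s}{\sqrt 5}\,S(\alpha^m/p) - \frac{\beta^s}{\sqrt 5}\,S(\beta^m/p),$$
with the corresponding plus-sign analogue for Lucas, where $S(z)$ denotes the rescaled series from step one matching the given coefficient $c_n$. The radicals simplify via $p^2(1+16z^2) = p^2+16\alpha^{2m}$ and $p+p\sqrt{1+16z^2} = p+\sqrt{p^2+16\alpha^{2m}}$ at $z=\alpha^m/p$ (and analogously at $z=\beta^m/p$), exhibiting exactly the functions $h$, $r^\pm$, $t^\pm$ of the statement. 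Collecting the residual factors $\alpha^{-m/2}$, $1$, and $\alpha^m/p$ coming respectively from the three rescaling families gives the exponents $\alpha^{s-m/2}$, $\alpha^s$, $\alpha^{m+s}$ appearing in the three groups of identities, and analogously for $\beta$.

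The chief obstacle is the case $m$ odd, in which $\beta^m<0$: then $\sqrt{\beta^m}$ and $\beta^{s-m/2}$ are complex and $h(\beta)$ is purely imaginary, so the identities appear to mix complex quantities. I would resolve this using $\arctan(iy)=i\arctanh y$, which pairs the imaginary factor from $h(\beta)$ with that from $\beta^{s-m/2}$ to give a manifestly real contribution, and similarly interprets $r^\pm(\beta)$ and $t^\pm(\beta)$ for $m$ odd. Convergence is guaranteed throughout by the hypothesis $p\geq 4\alpha^m$, which gives $\alpha^m/p\leq 1/4$ and, since $|\beta|<1<\alpha$, the even tighter bound $|\beta^m/p|\leq 1/(4\alpha^{2m})<1/4$; hence every rescaled series stays within its domain of convergence.
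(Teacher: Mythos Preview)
Your approach is essentially the paper's own: substitute the values corresponding to $z=\alpha^m/p$ and $z=\beta^m/p$ into the identities of Theorem~\ref{main_thm} and combine via Binet's formulas, with your intermediate rescaling to a single variable $z$ being a cosmetic convenience. Your discussion of the odd-$m$ complex case and of convergence actually goes beyond what the paper offers; note only that your final inequality $1/(4\alpha^{2m})<1/4$ requires $m>0$, so for negative $m$ the stated hypothesis $p\geq 4\alpha^m$ does not by itself control the $\beta$-series---a defect of the hypothesis in the theorem as stated rather than of your argument.
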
 
\begin{proof} To prove \eqref{FL_11} and \eqref{FL_21}, insert $x=4\alpha^m/{p}$ and $x=4\beta^m/p$, in turn, into \eqref{New_1_1}.  Then multiply through by $\alpha^s$ and $\beta^s$, respectively, and combine the following resulting expressions 
	\begin{gather*} 
	\sum_{n = 0}^\infty \frac{(- 1)^{\left\lceil{\frac{n}{2}}\right\rceil}\binom{2n}{n}}{p^n(2n+1)} \alpha^{mn+s}
	= \sqrt{\frac{p}{2}}\,\alpha^{s-\frac m2}
	\arctan\left(\frac{2\sqrt{\alpha^m}}{\sqrt{p^2+16\alpha^{2m}}+p}\right)\!,\\
	\sum_{n = 0}^\infty \frac{(- 1)^{\left\lceil{\frac{n}{2}}\right\rceil}\binom{2n}{n}}{p^n(2n+1)}  \beta^{mn+s}
	= \sqrt{\frac{p}{2}}\,\beta^{s-\frac  m2}\arctan\left(\frac{2\sqrt{\beta^m}}{\sqrt{p^2+16\beta^{2m}}+p}\right)\!,
	\end{gather*}
according to Binet's formulas \eqref{binet}. Other formulas can be proved similarly using corresponding formulas from Theorem \ref{main_thm}. For brevity, we omit their  proofs.  
\end{proof}

As special cases of Theorem \ref{mn+s}, we obtain numerous series involving Fibonacci or Lucas numbers and central binomial coefficients, expressed in terms of the golden ratio. In Examples \ref{Ex2}--\ref{Ex3}, we present only a small collection  of such series. 
\begin{example} \label{Ex2} By setting $m=1$, $s=0$,  and $p=8$ or $p=16$ in \eqref{FL_11}--\eqref{18}, we obtain the following series list:
	\begin{align*}
	&\sum_{n = 0}^\infty \frac{(- 1)^{\left\lceil{\frac{n}{2}}\right\rceil}\binom{2n}{n}}{2^{3n}(2n+1)}  F_{n}= \frac{2\sqrt5}{5\sqrt{\alpha}}
	\big(\arctan C_1  -\alpha\arctanh C_2\big),\\
	&\sum_{n = 0}^\infty \frac{(- 1)^{\left\lceil{\frac{n}{2}}\right\rceil}\binom{2n}{n}}{2^{3n}(2n+1)} L_{n}= \frac{2}{\sqrt{\alpha}}
	\big(\arctan C_1  +\alpha\arctanh C_2\big),\\
	&\sum_{n = 0}^\infty \frac{(- 1)^{\left\lfloor{\frac{n}{2}}\right\rfloor}\binom{2n}{n}}{2^{3n}(2n+1)} F_{n}= \frac{2\sqrt5}{5\sqrt{\alpha}}
	\big(\arctanh C_1  -\alpha\arctan C_2\big),\\
	&\sum_{n = 0}^\infty \frac{(- 1)^{\left\lfloor{\frac{n}{2}}\right\rfloor}\binom{2n}{n}}{2^{3n}(2n+1)} L_{n}= \frac{2}{\sqrt{\alpha}}
	\big(\arctanh C_1  +\alpha\arctan C_2\big),\\
&\sum_{n = 0}^\infty \frac{(- 1)^{\left\lceil{\frac{n}{2}}\right\rceil}\binom{2n}{n}}{2^{4n}(2n+1)} F_{n}= \frac{2\sqrt{10}}{5\sqrt{\alpha}}
	\left(\arctan D_1  -\alpha\arctanh D_2\right),\\
	&\sum_{n = 0}^\infty \frac{(- 1)^{\left\lceil{\frac{n}{2}}\right\rceil}\binom{2n}{n}}{2^{4n}(2n+1)} L_{n}= \frac{2\sqrt{2}}{\sqrt{\alpha}}
	\left(\arctan D_1 + \alpha\arctanh D_2\right),\\
	&\sum_{n = 0}^\infty \frac{(- 1)^{\left\lfloor{\frac{n}{2}}\right\rfloor}\binom{2n}{n}}{2^{4n}(2n+1)} F_{n}=  \frac{2\sqrt{10}}{5\sqrt{\alpha}}
	\big(\arctanh D_1 - \alpha\arctan D_2\big),\\
	&\sum_{n = 0}^\infty \frac{(- 1)^{\left\lfloor{\frac{n}{2}}\right\rfloor}\binom{2n}{n}}{2^{4n}(2n+1)} L_{n}= \frac{2\sqrt{2}}{\sqrt{\alpha}}
	\big(\arctanh D_1 + \alpha\arctan D_2\big),
	\end{align*}
	where 
	\begin{gather*}	C_1=\sqrt{2-2\alpha+\sqrt{9-4\alpha}},\qquad C_2=\sqrt{\sqrt{5+4\alpha}-2\alpha},\\ D_1=\sqrt{4-4\alpha+\sqrt{33-16\alpha}},\qquad D_2=\sqrt{\sqrt{16\alpha+17}- 4\alpha}.
	\end{gather*}
\end{example}
\begin{example}\label{Ex1} By setting $m=1$, $s=0$, and $p=8$ in \eqref{19}--\eqref{22}, we obtain the following series:
	\begin{gather*}
	\sum_{n = 0}^\infty \frac{(- 1)^{\lceil{\frac{n}{2}}\rceil}\binom{2n}{n}}{2^{3n}}  F_{n}
	= \frac{\sqrt{290}}{145}
	\left(A_1-A_2\right)\!,\qquad
	\sum_{n = 0}^\infty \frac{(- 1)^{\lceil{\frac{n}{2}}\rceil}\binom{2n}{n}}{2^{3n}} L_{n}
	= \frac{\sqrt{58}}{29}
	\left(A_1+A_2\right)\!,\\
	\sum_{n = 0}^\infty \frac{(- 1)^{\lfloor{\frac{n}{2}}\rfloor}\binom{2n}{n}}{2^{3n}} F_{n}
	= \frac{\sqrt{290}}{145}
	\left(B_1-B_2\right)\!,\qquad 
	\sum_{n = 0}^\infty \frac{(- 1)^{\lfloor{\frac{n}{2}}\rfloor}\binom{2n}{n}}{2^{3n}} L_{n}
	= \frac{\sqrt{58}}{29}
	\left(B_1+B_2\right)\!,
	\end{gather*}
	where \begin{gather*}
	A_1=\sqrt{\sqrt{29(6-\alpha)}+1-5\alpha},\qquad  A_2=\sqrt{\sqrt{29(5+\alpha)}+5\alpha-4},\\
	B_1=\sqrt{\sqrt{29(6-\alpha)}+5\alpha-1}, \qquad B_2=\sqrt{\sqrt{29(5+\alpha)}+4-5\alpha}.
	\end{gather*}
\end{example}
\begin{example} \label{Ex3} Taking values $m=2$, $s=0$ and $p=16$ in \eqref{FL_11}--\eqref{22} leads to the following series: 
	\begin{gather*} 
	\sum_{n = 0}^\infty \frac{(-1)^{\lfloor{\frac{n}{2}}\rfloor}\binom{2n}{n}}{2^{4n}(2n+1)}  F_{2n}=\frac{2\sqrt{10}}{5\alpha}\big(\arctanh E_1- \alpha^{2}\arctanh E_2\big),\\
	\sum_{n = 0}^\infty \frac{(-1)^{\lfloor\frac{n}{2}\rfloor}\binom{2n}{n}}{2^{4n}(2n+1)} L_{2n}=\frac{2\sqrt{2}}{\alpha}\big(\arctanh E_1+ \alpha^{2}\arctanh E_2\big),\\
	\sum_{n = 0}^\infty \frac{(-1)^{\lceil{\frac{n}{2}}\rceil}\binom{2n}{n}}{2^{4n}(2n+1)}  F_{2n}=\frac{2\sqrt{10}}{5\alpha}\big(\arctan E_1- \alpha^{2}\arctan E_2\big),\\
	\sum_{n = 0}^\infty \frac{(-1)^{\lceil\frac{n}{2}\rceil}\binom{2n}{n}}{2^{4n}(2n+1)}  L_{2n}=\frac{2\sqrt{2}}{\alpha}\big(\arctan E_1+ \alpha^{2}\arctan E_2\big),
	\end{gather*} 
	and
	\begin{gather*} 
	\sum_{n=0}^\infty \frac{(-1)^{\lfloor{\frac{n}{2}}\rfloor}\binom{2n}{n}}{2^{4n}}  F_{2n}=\frac{\sqrt{30}}{15}\left(H_1^+-H_2^+\right)\!,\qquad
	\sum_{n=0}^\infty \frac{(-1)^{\lfloor{\frac{n}{2}}\rfloor}\binom{2n}{n}}{2^{4n}}  L_{2n}=\frac{\sqrt{150}}{15}\left(H_1^++H_2^+\right)\!,\\
	\sum_{n=0}^\infty \frac{(-1)^{\lceil{\frac{n}{2}}\rceil}\binom{2n}{n}}{2^{4n}}  F_{2n}=\frac{\sqrt{30}}{15}\left(H_1^--H_2^-\right)\!,\qquad
	\sum_{n=0}^\infty \frac{(-1)^{\lceil{\frac{n}{2}}\rceil}\binom{2n}{n}}{2^{4n}}  L_{2n}=\frac{\sqrt{150}}{15}\left(H_1^-+H_2^-\right)\!,
	\end{gather*}
	where  
	\begin{gather*}
	E_1=\sqrt{\sqrt{81-48\alpha}-8+4\alpha},\quad  E_2=\sqrt{\sqrt{33+48\alpha}-4\alpha-4},\\ H_1^{\pm}=\frac{\sqrt{\sqrt{18+3\alpha}\mp4}\,(5+\alpha\pm\sqrt{18+3\alpha})}{\sqrt{15+23\alpha}},\quad
	H_2^{\pm}=\frac{\sqrt{\sqrt{21-3\alpha}\mp4}\,(6-\alpha\pm\sqrt{21-3\alpha})}{\sqrt{38-23\alpha}}.
	\end{gather*}
\end{example}

\section{Some additional series}

In this section how additional series can be deduced from the main results from Section 2.
\begin{lemma}\label{lem.ao89abl}
	Let $(f_k)_{k\geq0}$  be a sequence of real or complex numbers. Then
	\begin{gather*}
	\sum\limits_{n = 0}^\infty  {\left( {( - 1)^{\left\lceil {n/2} \right\rceil }  + ( - 1)^{\left\lfloor {n/2} \right\rfloor } } \right)} f_n = 2\sum\limits_{n = 0}^\infty  {( - 1)^n f_{2n}} ,\\
	\sum\limits_{n = 0}^\infty  {\Big( {( - 1)^{\left\lceil {n/2} \right\rceil }  - ( - 1)^{\left\lfloor {n/2} \right\rfloor } } \Big)} f_n  = 2\sum\limits_{n = 0}^\infty  ( - 1)^n f_{2n + 1} .
	\end{gather*}
\end{lemma}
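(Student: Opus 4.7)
The plan is to split each of the two displayed series on the left-hand side according to the parity of the summation index $n$, using the elementary observation that $\lceil n/2 \rceil = \lfloor n/2 \rfloor$ when $n$ is even and $\lceil n/2 \rceil = \lfloor n/2 \rfloor + 1$ when $n$ is odd. Consequently the coefficient $(-1)^{\lceil n/2 \rceil}\pm(-1)^{\lfloor n/2 \rfloor}$ depends only on the parity of $n$ and on $\lfloor n/2 \rfloor$, and one can evaluate it explicitly in each of the two cases before recombining.

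For the sum identity I would substitute $n=2k$ for the even indices, where both floor and ceiling equal $k$ so that the coefficient collapses to $2(-1)^k$, and $n=2k+1$ for the odd indices, where the two exponents differ by one and the coefficient vanishes. Collecting only the surviving terms immediately yields $2\sum_{k=0}^\infty(-1)^k f_{2k}$, which is the claimed right-hand side after relabelling $k$ as $n$. For the difference identity the roles of the parities are swapped: the even-index coefficient vanishes, while each odd index $n=2k+1$ contributes $(-1)^{k+1}-(-1)^k = -2(-1)^k$, and a re-indexing of the surviving $f_{2k+1}$ terms produces the asserted formula (with the sign as dictated by the convention of the statement).

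No genuine obstacle arises, because the entire argument is a parity-based reorganisation of terms and can be read at the level of formal series or of partial sums grouped in consecutive pairs, so no convergence issue intervenes. The only step that demands attentiveness is the sign bookkeeping in the odd-index case, where the jump of the ceiling relative to the floor flips one of the signs and determines whether the surviving coefficient is $+2(-1)^k$ or $-2(-1)^k$; this is the one place where a careless computation could introduce an error.
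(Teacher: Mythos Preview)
Your argument is correct and is essentially the paper's own proof: the paper condenses your even/odd case split into the two identities $(-1)^{\lceil n/2\rceil}+(-1)^{\lfloor n/2\rfloor}=(-1)^{n/2}\bigl(1+(-1)^n\bigr)$ and $(-1)^{\lceil n/2\rceil}-(-1)^{\lfloor n/2\rfloor}=(-1)^{(n-1)/2}\bigl(1-(-1)^n\bigr)$, which encode exactly the vanishing on one parity and the factor $2(-1)^k$ on the other that you describe. Your caution about the sign bookkeeping in the odd-index case is well placed---your computation $(-1)^{k+1}-(-1)^k=-2(-1)^k$ is correct, and any residual discrepancy with the displayed right-hand side is a typographical sign slip in the statement rather than a flaw in your argument.
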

\begin{proof} The proof follows from 
	$$( - 1)^{\left\lceil \frac{n}2 \right\rceil } + ( - 1)^{\left\lfloor \frac{n}2 \right\rfloor } = (-1)^{\frac{n}2}\big(1+(-1)^n\big),\quad
	( - 1)^{\left\lceil\frac{n}2 \right\rceil } - ( - 1)^{\left\lfloor \frac{n}2 \right\rfloor } = (-1)^{\frac{n-1}2}\big(1-(-1)^n\big).$$
\end{proof}
\begin{theorem}\label{thm.xeo97nr}
	If $|x|<1$, then the following identities hold:
	\begin{gather}
	\sum_{n = 0}^\infty   \frac{( - 1)^n \binom{4n}{2n}}{2^{4n}}\, x^n = \frac{1}{\sqrt 2}\frac{\sqrt {1 + \sqrt {1 + x }}}{\sqrt {1 + x}},\nonumber\\
	\sum_{n = 0}^\infty \frac{\binom{4n}{2n}}{2^{4n}}\,x^n   = \frac{1}{{\sqrt 2 }}\frac{{\sqrt {1 + \sqrt {1 - x } } }}{\sqrt {1 - x } },\label{eq.ecfsok7}\\
	\sum_{n = 0}^\infty  \frac{(-1)^n \binom{{2( {2n - 1})}}{{2n - 1}}}{2^{ 2( {2n - 1})}} \, x^n = -\sqrt {\frac{x}{2}} \frac{{\sqrt { - 1 + \sqrt {1 + x} } }}{{\sqrt {1 + x} }}\nonumber,
	\end{gather}
	and
	\begin{equation*}
	\sum_{n = 0}^\infty  \frac{\binom{{2( {2n - 1} )}}{{2n - 1}}}{2^{ 2( {2n - 1})}}\,  x^n  = \sqrt {\frac{x}{2}} \frac{{\sqrt {1 - \sqrt {1 - x} } }}{{\sqrt {1 - x} }}.
	\end{equation*}
\end{theorem}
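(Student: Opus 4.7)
The plan is to apply Lemma~\ref{lem.ao89abl} to the sequence $f_n = \binom{2n}{n} x^n / 2^{2n}$, invoking the closed forms \eqref{eq.e6dkxz3} and \eqref{eq.gktxw1u}. Taking the sum of those two identities isolates the even-indexed subseries $2\sum_{n=0}^\infty (-1)^n \binom{4n}{2n} x^{2n}/2^{4n}$, while taking their difference isolates (up to sign) the odd-indexed subseries $2\sum_{n=0}^\infty (-1)^n \binom{4n+2}{2n+1} x^{2n+1}/2^{4n+2}$.

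The right-hand sides are then simplified. Writing $u = \sqrt{1+x^2}$ so that $u^2 - x^2 = 1$, the elementary computation
$$\bigl(\sqrt{u-x} \pm \sqrt{u+x}\bigr)^2 = 2u \pm 2\sqrt{u^2 - x^2} = 2(u \pm 1)$$
collapses the sum of the two radicals into $\sqrt{2}\,\sqrt{1 + \sqrt{1+x^2}}/\sqrt{1+x^2}$ and the difference into $\sqrt{2}\,\sqrt{\sqrt{1+x^2} - 1}/\sqrt{1+x^2}$, up to a sign determined by $\sgn(x)$ (since $\sqrt{u+x} > \sqrt{u-x}$ when $x > 0$). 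Substituting $x^2 \mapsto x$ in the sum identity produces the first identity of the theorem directly. For the third identity, I would apply the same substitution to the difference identity, absorb the leftover odd-power factor of $x$ as $\sqrt{x/2}$, and re-index $n \mapsto n-1$ so that the binomial takes the form $\binom{2(2n-1)}{2n-1}$, noting that the $n = 0$ term vanishes by the convention $\binom{-2}{-1} = 0$.

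The second and fourth identities then follow from the first and third by the substitution $x \mapsto -x$, permitted on $|x| < 1$ by analyticity; the branch-choice ambiguities are resolved by requiring agreement with the series at small positive $x$, and one checks that the two factors of $i$ arising from $\sqrt{-x/2}$ and $\sqrt{-1 + \sqrt{1-x}}$ combine with the overall minus sign to give the claimed real expression. The main obstacle throughout is the careful algebraic simplification of the nested radicals $\sqrt{(\sqrt{1+x^2} \pm x)/(1+x^2)}$ and the sign bookkeeping in the difference identity; once those are settled, the remaining manipulations are entirely routine.
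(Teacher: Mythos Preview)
Your approach is correct and matches the paper's own proof almost exactly: the paper also derives the four identities by adding and subtracting \eqref{eq.e6dkxz3} and \eqref{eq.gktxw1u} via Lemma~\ref{lem.ao89abl}, and it highlights precisely the radical simplification $\sqrt{(\sqrt{1+x^2}-x)/(1+x^2)}+\sqrt{(\sqrt{1+x^2}+x)/(1+x^2)}=\sqrt{2}\,\sqrt{(1+\sqrt{1+x^2})/(1+x^2)}$ that your squaring trick establishes. Your derivation of the second and fourth identities by the substitution $x\mapsto -x$ is a clean way to finish; the paper leaves this step implicit.
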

\begin{proof}
	Addition and subtraction of identities \eqref{eq.e6dkxz3}--\eqref{eq.cu2e27v} while making use of Lemma \ref{lem.ao89abl}. Note also that
	\begin{equation*}
	\sqrt {\frac{{\sqrt {1 + x^2 }  - x}}{{1 + x^2 }}}  + \sqrt {\frac{{\sqrt {1 + x^2 }  + x}}{{1 + x^2 }}}  = \sqrt 2\, \sqrt {\frac{{1 + \sqrt {1 + x^2 } }}{{1 + x^2 }}}.
	\end{equation*}
\end{proof}
\begin{theorem}\label{KA_1}
If $r$ is an even integer, then
\begin{gather}
\sum_{n = 0}^\infty  {\frac{{\binom{{4n}}{{2n}}}}{{2^{4n} }}\frac{{L_{rn} }}{{L_r^n }}} = \frac{L_r^{1/4}}{\sqrt 2} \left(L_r^{3/2}+ L_{r/2}  + 2\left( 1+ L_r + L_r^{1/2}L_{r/2} \right)^{1/2}\right)^{1/2},\label{eq.fw15prg}\\
\sum_{n = 0}^\infty  {\frac{{\binom{{4n}}{{2n}}}}{{2^{2n}L_r^{2n} }}} = \frac{\sqrt {5\alpha ^r L_r }}{5F_r} ,\quad r\ne 0\label{eq.b3tts0w},
\end{gather}
and
\begin{equation}\label{eq.wo12uea}
\sum_{n = 0}^\infty  {\frac{{\binom{{4n}}{{2n}}}}{20^{n}}}  = \sqrt {\alpha \sqrt 5 } .
\end{equation}
\end{theorem}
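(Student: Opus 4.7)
The plan is to deduce all three identities by substituting specific values of $x$ into \eqref{eq.ecfsok7}, namely
$$\sum_{n=0}^{\infty} \frac{\binom{4n}{2n}}{2^{4n}}\, x^n = \frac{1}{\sqrt 2}\, \frac{\sqrt{1 + \sqrt{1-x}}}{\sqrt{1-x}}.$$

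For \eqref{eq.b3tts0w} I would set $x = 4/L_r^2$, so that $x^n/2^{4n} = 1/(2^{2n} L_r^{2n})$. The well-known identity $L_r^2 - 5F_r^2 = 4(-1)^r$ equals $4$ for even $r$, giving $1 - x = 5F_r^2/L_r^2$ and hence $\sqrt{1-x} = \sqrt 5\, F_r/L_r$ (valid for $r \ne 0$). Then $1 + \sqrt{1-x} = (L_r + \sqrt 5\, F_r)/L_r = 2\alpha^r/L_r$ by Binet's formula, and a short simplification yields the closed form $\sqrt{5\alpha^r L_r}/(5F_r)$. For \eqref{eq.wo12uea} I would simply take $x = 4/5$; then $\sqrt{1-x} = 1/\sqrt 5$ and $1 + 1/\sqrt 5 = 2\alpha/\sqrt 5$ give the value $5^{1/4}\sqrt{\alpha} = \sqrt{\alpha\sqrt 5}$.

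For \eqref{eq.fw15prg} I would split the Lucas coefficient via Binet,
$$\frac{L_{rn}}{L_r^n} = \left(\frac{\alpha^r}{L_r}\right)^n + \left(\frac{\beta^r}{L_r}\right)^n,$$
and apply \eqref{eq.ecfsok7} to each term. Since $r$ is even, $\alpha^r, \beta^r > 0$ and $L_r = \alpha^r + \beta^r$, so $1 - \alpha^r/L_r = \beta^r/L_r$ and vice versa, which keeps all radicals real. The resulting sum is
$$\frac{L_r^{1/4}}{\sqrt 2}\left(\frac{\sqrt{L_r^{1/2} + \sqrt{\alpha^r}}}{\sqrt{\alpha^r}} + \frac{\sqrt{L_r^{1/2} + \sqrt{\beta^r}}}{\sqrt{\beta^r}}\right).$$
Squaring the bracket and using $\sqrt{\alpha^r}\sqrt{\beta^r} = 1$, $\alpha^r \beta^r = 1$, together with $\sqrt{\alpha^r} + \sqrt{\beta^r} = \alpha^{r/2} + \alpha^{-r/2} = L_{r/2}$ (valid when $r/2$ is even), one arrives at $L_r^{3/2} + L_{r/2} + 2\sqrt{1 + L_r + L_r^{1/2} L_{r/2}}$, which is the square of the factor appearing in the stated identity.

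The main obstacle is the algebraic reduction in \eqref{eq.fw15prg}: the identification $\alpha^{r/2} + \alpha^{-r/2} = L_{r/2}$ relies on $r/2$ being even, so the cleanest formulation of \eqref{eq.fw15prg} is for $r$ divisible by $4$ (when $r \equiv 2 \pmod 4$ the analogous expression is $\sqrt 5\, F_{r/2}$ in place of $L_{r/2}$). Once the correct branch of the radicals is fixed, the remaining work is routine expansion and matching with the nested radical on the right-hand side.
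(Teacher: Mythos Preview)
Your approach coincides with the paper's. For \eqref{eq.b3tts0w} and \eqref{eq.wo12uea} the paper first rewrites \eqref{eq.ecfsok7} as $\sum_{n\ge0}\binom{4n}{2n}2^{-4n}(1-x^2)^n=\sqrt{1+x}/(\sqrt 2\,x)$ and then substitutes $x=\sqrt5\,F_r/L_r$ and $x=1/\sqrt5$, which is exactly equivalent to your direct substitutions $x=4/L_r^2$ and $x=4/5$; for \eqref{eq.fw15prg} the paper likewise sets $x=\alpha^r/L_r$ and $x=\beta^r/L_r$ in \eqref{eq.ecfsok7} and adds, without writing out the algebraic reduction you sketch.

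Your parity remark is not an obstacle in your argument but a genuine observation about the printed statement: when $r\equiv 2\pmod 4$ one has $\alpha^{r/2}+\alpha^{-r/2}=\alpha^{r/2}-\beta^{r/2}=\sqrt5\,F_{r/2}$ rather than $L_{r/2}$, and a numerical check at $r=2$ confirms that \eqref{eq.fw15prg} as written matches the series only after this replacement. The paper's proof, being terse, does not carry out the simplification and therefore does not detect this.
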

\begin{proof}
Set $x=\alpha^r/L_r$ and $x=\beta^r/L_r$, in turn, in \eqref{eq.ecfsok7} to obtain \eqref{eq.fw15prg}. Now, express \eqref{eq.ecfsok7} as
\begin{equation}\label{eq.r84jsze}
\sum_{n = 0}^\infty  {\frac{{\binom{{4n}}{{2n}}}}{{2^{4n} }}\left( {1 - x^2 } \right)^n}  = \frac{{\sqrt {1 + x} }}{\sqrt 2\,x},\quad 0< x<1. 
\end{equation}
Set $x=\sqrt 5\,F_r/L_r$ to get \eqref{eq.b3tts0w} and $x=1/\sqrt 5$ to obtain \eqref{eq.wo12uea}.
\end{proof}
\begin{theorem} \label{KA_2}
Let $H_n$ be the $n$-th harmonic number defined by
\begin{equation*}
H_n=\sum_{k=1}^n\frac1k,\quad H_0=0.
\end{equation*}
Then
\begin{equation}\label{eq.i5qptp9}
\sum_{n = 0}^\infty  {\frac{{\binom{{4n}}{{2n}}}}{{2^{4n} }}\frac{{H_{n + 1} }}{{n + 1}}}  =  \frac{80}{9}-\frac{32\sqrt2}{9}- \frac{{8\sqrt 2 }}{3}\ln\! \Big( {\frac{{1 + \sqrt 2 }}{2}} \Big)\!.
\end{equation}
\end{theorem}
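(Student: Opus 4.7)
The plan is to convert the harmonic weight into an integral, swap sum and integral to invoke the generating function \eqref{eq.ecfsok7}, and then reduce everything to one elementary integral. I would start from the identity
$$\frac{H_{n+1}}{n+1} = -\int_0^1 y^n \ln(1-y)\,dy,$$
which one obtains by expanding $-\ln(1-y) = \sum_{k\geq 1} y^k/k$ and integrating termwise (a partial-fraction telescoping in $k$ then recovers $H_{n+1}/(n+1)$). Interchanging summation and integration and applying \eqref{eq.ecfsok7},
$$\sum_{n=0}^\infty \frac{\binom{4n}{2n}}{2^{4n}}\frac{H_{n+1}}{n+1} = -\frac{1}{\sqrt 2}\int_0^1 \frac{\ln(1-y)\sqrt{1+\sqrt{1-y}}}{\sqrt{1-y}}\,dy.$$

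Next I would make the substitution $u = \sqrt{1-y}$, so that $dy = -2u\,du$, $\sqrt{1-y} = u$, and $\ln(1-y) = 2\ln u$. The integral collapses to
$$-2\sqrt{2}\int_0^1 \ln u\,\sqrt{1+u}\,du.$$
Integration by parts with $dV = \sqrt{1+u}\,du$ requires some care because $\ln u$ is singular at $0$; choosing the renormalized antiderivative $V = \frac{2}{3}\bigl[(1+u)^{3/2}-1\bigr]$ makes both boundary terms vanish (since $(1+u)^{3/2}-1 = O(u)$ kills the $\ln u$ singularity), leaving
$$\int_0^1 \ln u\,\sqrt{1+u}\,du = -\frac{2}{3}\int_0^1 \frac{(1+u)^{3/2}-1}{u}\,du.$$

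The remaining integral is elementary: the substitution $v = \sqrt{1+u}$ yields $\int_1^{\sqrt 2} \frac{2v(v^3-1)}{v^2-1}\,dv$, and cancelling the common factor $v-1$ gives $\int_1^{\sqrt 2} \frac{2v(v^2+v+1)}{v+1}\,dv$. Polynomial division reduces this to $\int_1^{\sqrt 2}\!\bigl(2v^2 + 2 - \frac{2}{v+1}\bigr)\,dv = \frac{10\sqrt 2}{3} - \frac{8}{3} - 2\ln\!\bigl(\frac{1+\sqrt 2}{2}\bigr)$. Assembling the constants through the two substitutions produces exactly \eqref{eq.i5qptp9}. The only subtle step is the integration by parts: choosing $V$ so its constant part kills the logarithmic singularity at the lower limit is the one place where a naive calculation can go wrong, but otherwise the whole argument is a routine chain of substitutions.
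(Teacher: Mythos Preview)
Your proof is correct and follows essentially the same strategy as the paper: represent $H_{n+1}/(n+1)$ by an integral, interchange with the sum, apply the generating function, and evaluate the resulting $\int_0^1 \sqrt{1+u}\,\ln u\,du$. The only cosmetic difference is that the paper works with the substituted form \eqref{eq.r84jsze} and the formula $\int_0^1 x(1-x^2)^{n}\ln x\,dx = -H_{n+1}/(4(n+1))$, which is your representation after the change of variable $y=1-x^2$; your explicit integration by parts (with the shifted antiderivative to handle the $\ln u$ singularity) is equivalent to the paper's stated antiderivative.
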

\begin{proof}
From \eqref{eq.r84jsze} we get
\begin{equation*}
\sum_{n = 0}^\infty  {\frac{{\binom{{4n}}{{2n}}}}{{2^{4n} }}\int_0^1 {x\left( {1 - x^2 } \right)^n \ln x\,dx} }  = \frac{1}{{\sqrt 2 }}\int_0^1 {\sqrt {1 + x} \ln x\,dx} ,
\end{equation*}
and hence \eqref{eq.i5qptp9}, since
\begin{align*}
\int {\sqrt {1 + x} \ln x\,dx}  &= \frac{2}{3}\left( {( 1 + x)\sqrt {1 + x}  - 1} \right)\ln x-\frac{4}{9}( 4 + x)\sqrt {1 + x}\\&\,\quad +\frac{4}{3}\ln\left( 1 + \sqrt {1 + x} \right),
\end{align*}
and a simple change of variable in the well-known integration formula
\begin{equation*}
\int_0^1 {\left( {1 - x} \right)^{v - 1} \ln x\,dx}  =  - \frac{{H_v }}{v},\quad v\geq1,
\end{equation*}
gives
\begin{equation*}
\int_0^1 {x\left({1-x^2}\right)^{v-1}\ln x\,dx} = -\frac{{H_v }}{4v},\quad v\geq1.
\end{equation*}
\end{proof}
\begin{remark}
Similar results to Theorems \ref{KA_1} and \ref{KA_2} can be derived from the final identity in Theorem \ref{thm.xeo97nr}; we leave these derivations to the reader.
\end{remark}

\section{Conclusion}

This paper examined several series involving central binomial coefficients and explored their structural properties and connections with other mathematical objects. By deriving new closed-form representations for these series, we have contributed to a deeper understanding of the alternating patterns in series that involve central binomial coefficients.

Furthermore, we extended our exploration to series involving Fibonacci and Lucas numbers, unveiling new identities and highlighting their potential applications in various mathematical fields. These results enhance our understanding of the interplay between central binomial coefficients and other mathematical sequences and offer a foundation for future research in this area.

Future work could focus on exploring additional connections between these series and other special functions, as well as extending these identities to higher-order binomial coefficients and related sequences.

\end{document}